\documentclass[11pt]{amsart}

\usepackage[margin=1in]{geometry}
\usepackage{amsmath,amssymb,amsthm,mathtools}
\usepackage{enumitem}
\usepackage{microtype}
\usepackage[hidelinks]{hyperref}
\newtheorem{theorem}{Theorem}[section]
\newtheorem{lemma}[theorem]{Lemma}
\newtheorem{proposition}[theorem]{Proposition}
\newtheorem{corollary}[theorem]{Corollary}
\theoremstyle{remark}

\newcommand{\C}{\mathbb C}
\newcommand{\D}{\mathbb D}
\newcommand{\e}{\mathrm e}
\newcommand{\norm}[1]{\left\lVert #1\right\rVert}
\newcommand{\RePart}{\operatorname{Re}}
\newcommand{\ImPart}{\operatorname{Im}}
\newcommand{\Pcal}{\mathcal P}
\title[Residual bounds for Schur-stable polynomials]{Residual bounds
for Schur-stable polynomials}

\author{Xiaojun Tan}
\address{Institute of Theoretical Physics, Chinese Academy of Sciences,
Beijing 100190, China}
\author{Qihang Wang}
\address{School of Mathematical Sciences, Peking University,
Beijing 100871, China}
\author{Wei Huang}
\address{RIKEN Center for Advanced Intelligence Project (AIP), Tokyo,
Japan, and The Institute of Statistical Mathematics, Tokyo, Japan}
\author{Kun Chen}
\address{Institute of Theoretical Physics, Chinese Academy of Sciences,
Beijing 100190, China}
\email{chenkun@itp.ac.cn}
\thanks{Xiaojun Tan and Qihang Wang contributed equally to this work.
Kun Chen is the corresponding author.}

\hypersetup{
  pdftitle={Residual bounds for Schur-stable polynomials},
  pdfauthor={Xiaojun Tan; Qihang Wang; Wei Huang; Kun Chen},
  pdfsubject={Residual bounds for Schur-stable polynomials},
  pdfkeywords={Schur-stable polynomials, polynomial residuals, zero
    separation, power sums, Erdos Problem 973}
}

\subjclass[2020]{Primary 30C15; Secondary 30H10, 30E05, 30E20}
\keywords{Schur-stable polynomials, polynomial residuals, zero
separation, power sums, Erd\H{o}s problem}

\begin{document}

\begin{abstract}
Let \(\mathcal P_n\) be the class of degree-\(n\) polynomials \(P\)
satisfying \(P(0)=1\) whose zeros lie in the closed unit disk, and let
\(r_n\) be the infimum of
\[
 \frac{\lVert P'-P'(0)P\rVert_{H^2}}{\lVert P\rVert_{H^2}}
\]
over \(\mathcal P_n\).  We prove the quantitative residual bound
\[
 r_n\geq
 \exp\!\bigl(-(1+o(1))\sqrt n\log n\bigr)
 \qquad(n\to\infty).
\]
As an application, we answer Erd\H{o}s Problem 973 on exterior power
sums in the negative, in a form quantitatively stronger than the
answer first obtained by Luo, Yang, and Zhu.
\end{abstract}

\maketitle

\section{Introduction}

A polynomial of degree \(n\geq1\) cannot satisfy the
constant-coefficient equation \(P'=P'(0)P\), whose solutions are
exponentials.  This paper bounds from below how nearly a polynomial
with all zeros in the closed unit disk can satisfy it, and applies
the bound to a problem of Erd\H{o}s on power sums.

Write \(\D=\{w\in\C:|w|<1\}\), and let \(\Pcal_n\) be the class of
degree-\(n\) polynomials \(P\) such that
\[
 P(0)=1,
 \qquad
 \{\text{zeros of }P\}\subseteq\overline\D.
\]
Here, Schur-stable means that all zeros lie in the closed unit disk, so
zeros on the unit circle are allowed.  For
\(P\in\Pcal_n\), define the normalized residual
\[
 \rho(P)=
 \frac{\norm{P'-P'(0)P}_{H^2}}{\norm P_{H^2}},
 \qquad
 r_n=\inf_{P\in\Pcal_n}\rho(P).
\]
The numerator measures the failure of \(P\) to satisfy the
constant-coefficient equation \(P'=P'(0)P\).

\subsection*{Notation}

For a polynomial \(Q(t)=\sum_kq_kt^k\), the \(H^2\) norm is
\[
 \norm Q_{H^2}
 =\Bigl(\sum_k|q_k|^2\Bigr)^{1/2};
\]
by Parseval's identity it equals the mean
\(\bigl(\frac1{2\pi}\int_0^{2\pi}
|Q(\e^{i\theta})|^2\,d\theta\bigr)^{1/2}\) over the unit circle.
Only two norms occur in this paper: we abbreviate
\(\norm Q_2=\norm Q_{H^2}\), and we write
\(\norm Q_\infty=\max_{|\zeta|=1}|Q(\zeta)|\), which equals
\(\max_{|t|\leq1}|Q(t)|\) by the maximum-modulus principle.

Our main result is the following.

\begin{theorem}[Residual bound]\label{thm:stable}
For every \(\epsilon>0\), there exists \(N_\epsilon\in\mathbb N\)
such that
\[
 r_n\geq
 \exp\!\left[-\sqrt n\left(
 \log n+\log\frac2{\log2}+\epsilon\right)\right]
 \qquad(n\geq N_\epsilon).
\]
\end{theorem}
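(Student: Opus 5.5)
The plan is to pass from the residual to the coefficients of \(P\), use the zero condition to pin down the top coefficient, and then play off a lower bound on \(\norm R_2\) against an upper bound on \(\norm P_2\). Write \(P(t)=\sum_{k=0}^n p_kt^k\) with \(p_0=1\), and set \(R=P'-P'(0)P=\sum_{k=0}^{n} r_kt^k\). Then
\[
 r_k=(k+1)p_{k+1}-p_1p_k\quad(0\le k\le n-1),\qquad r_n=-p_1p_n,
\]
so in particular \(r_0=0\). Since the zeros \(z_j\) of \(P\) lie in \(\overline\D\) and \(P(0)=1\), we have \(P(t)=\prod_j(1-t/z_j)\) and \(|p_n|=\prod_j|z_j|^{-1}\geq1\). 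The argument then splits into two estimates, and the rest of the proof balances them.

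\emph{Lower bound for \(\norm R_2\).} Solving the recursion upward from \(p_0=1\) gives, for \(k\geq1\),
\[
 p_k=\frac{p_1^k}{k!}+\sum_{j=1}^{k-1}\frac{j!}{k!}\,p_1^{\,k-1-j}\,r_j .
\]
We also have \(|p_1|\,|p_n|=|r_n|\le\norm R_2\), so \(p_1\) is small whenever \(\rho(P)\) is small. Taking \(k=n\) and using \(|p_n|\ge1\) then forces some \(r_j\) to be large relative to the factorial weights \(j!/n!\). The crude version of this already gives \(\rho(P)\gtrsim 2^{-n}\), via Cauchy's bound \(|p_k|\le|p_n|\binom nk\), which yields \(\norm P_2\le |p_n|\binom{2n}{n}^{1/2}\). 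That is far too weak, so the work lies in improving the estimate for \(\norm P_2/|p_n|\).

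\emph{Improved estimate for \(\norm P_2/|p_n|\).} For the improvement I would split the index range at \(m\approx\sqrt n\).
\begin{itemize}[leftmargin=2em]
\item \emph{Low indices} \(k\le n-m\). Here the recursion from the bottom expresses \(p_k\) through \(p_1\) and \(r_1,\dots,r_{k-1}\). Because of the factor \(1/k!\), these coefficients are at most \(\norm R_2\) times a modest factor.
\item \emph{Top \(m\) indices.} Here I would use the zero condition in reversed form. The polynomial \(t^n\overline{P(1/\bar t)}/\overline{p_n}\) is monic with its zeros \(\bar z_j\) in \(\overline\D\), so its low coefficients are bounded by \(\binom nk\le n^k/k!\). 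This gives \(|p_{n-k}|\le|p_n|\,n^k/k!\) for \(k\le m\).
\end{itemize}
With \(m\sim\sqrt n\), the second bound costs roughly \(\exp(m\log n)\). The first costs the loss from \(1/k!\) against powers of \(1/|p_1|\) over the middle range, and the constant \(\log(2/\log 2)\) should come out of optimizing \(m=c\sqrt n\) in that trade-off.

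\emph{Combining the two.} Once \(\norm P_2\le |p_n|\,\norm R_2\,\e^{O(\sqrt n)}+|p_n|\,n^{m}\) is in hand, the top-degree identity \(n p_n=r_{n-1}+p_1p_{n-1}\) closes the argument. It gives \(n|p_n|\le \norm R_2\,(1+|p_{n-1}|)\), and substituting the bounds above yields \(\rho(P)\ge \exp\bigl(-\sqrt n(\log n+O(1))\bigr)\). The main obstacle is the middle range of indices, roughly \(\sqrt n\le k\le n-\sqrt n\), where neither the bottom-up recursion (because of the \(1/p_1\) growth) nor the reversed Cauchy bound (which grows like \(\binom nk\)) is efficient. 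There I would first try to control \(p_k\) via the \(H^2\) mean on a circle of radius slightly less than \(1\), applying Jensen's formula to \(P\) to limit how many zeros can lie far inside \(\D\).
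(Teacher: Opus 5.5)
The preliminary steps are fine: the coefficient identities, \(\norm R_2\gtrsim|p_n|\) from the top-degree identity, and the crude \(2^{-n}\) bound. But the estimate you then need, \(\norm P_2\le\norm R_2\,\e^{O(\sqrt n\log n)}\), is just a restatement of the theorem, and your argument for it fails at its load-bearing step. You claim that \(|p_1||p_n|=|r_n|\le\norm R_2\) makes \(p_1\) small when \(\rho(P)\) is small. But \(\norm R_2=\rho(P)\norm P_2\), and \(\norm P_2/|p_n|\) can be exponentially large, so nothing follows.

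The opposite is true. Let \(\tau\in\partial\D\) maximize \(|P|\). Every zero \(\beta\) satisfies \(\RePart\frac{\tau}{\tau-\beta}\geq\frac12\), so \(|P'(\tau)|\geq\frac n2|P(\tau)|\) (Tur\'an's inequality). Meanwhile \(|R(\tau)|\leq\sqrt{n+1}\,\norm R_2\leq\sqrt{n+1}\,\rho(P)|P(\tau)|\). Since \(P'(\tau)-R(\tau)=p_1P(\tau)\), this gives \(|p_1|\geq\frac n2-\sqrt{n+1}\,\rho(P)\). So in exactly the regime you must exclude, \(|p_1|\gtrsim n/2\), and the weights \(|p_1|^{k-1-j}j!/k!\) in your bottom-up formula reach \(\e^{(1/2-o(1))n}\). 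Your low-index bound ``\(\norm R_2\) times a modest factor'' is therefore unjustified, and the middle range is not a leftover technicality but the whole problem.

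More fundamentally, the only information you extract from the zero condition is \(|p_n|\geq1\) and \(|p_{n-k}|\leq|p_n|\binom nk\). Together with the exact recursion, this cannot give any subexponential bound. Take \(P(t)=\sum_{k=0}^n(\lambda t)^k/k!\) with \(\lambda=(n!)^{1/n}\sim n/\e\). Then:
\(p_0=p_n=1\);
the inequalities \(|p_{n-k}|\leq\binom nk\) reduce to \(k!\leq\lambda^k\) and hold for all \(k\leq n\);
\(R=-\lambda^{n+1}t^n/n!\), so \(\norm R_2=\lambda\), while \(\norm P_2\sim\e^{\lambda}(4\pi\lambda)^{-1/4}\).
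Thus \(\rho(P)=\e^{-(1/\e+o(1))n}\). This \(P\) is excluded from \(\Pcal_n\) only because some of its zeros (which cluster near \(\e\) times the Szeg\H{o} curve) lie outside \(\overline\D\). None of your estimates detect this, and the closing remark about Jensen's formula does not say how it would.

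What is missing is a mechanism linking the near-ODE to the geometry of the zeros. The paper does this in four moves:
it normalizes at the maximum point, so that \(G'=\mu G+E\) with \(\mu\) within \(o(1)\) of the real number \(G'(1)\geq n/2\);
it integrates \((\e^{-\mu w}G)'=\e^{-\mu w}E\) from each zero to \(1\), obtaining \(1-\RePart\beta_\ell\geq c_n/2\);
it applies Leibniz's rule to \(E=FG\) at \(1\), making the moments \(\frac1n\sum_\ell v_\ell^k\), \(2\leq k\leq n+1\), of \(v_\ell=1/(1-\beta_\ell)\) tiny;
it reaches a contradiction with the test polynomial \(v(1-\frac{c_n}{2}v)^{K-1}\), \(K\sim2\sqrt n\).
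The constant \(\log(2/\log2)\) comes from \(A_m\leq2\,m!/(\log2)^{m+1}\) in that Leibniz recursion, not from optimizing a cutoff \(m\). Your target \(\log n+O(1)\) would not give the stated constant anyway.

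Two minor points. First, \(t^n\overline{P(1/\bar t)}\) is already monic with zeros \(1/\bar z_j\) outside the disk; the bound you want follows directly from \(P/p_n=\prod_j(t-z_j)\). Second, the product \(|p_n|\norm R_2\) in your combining inequality would leave an unbounded factor \(1/|p_n|\) in the final bound.
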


The proof of Theorem~\ref{thm:stable} occupies
Section~\ref{sec:proof}.  It is by contradiction: we suppose that
some \(P\in\Pcal_n\) satisfies \(\rho(P)\leq\e^{-c_nn}\), where
\(c_n\sim(\log n)/\sqrt n\) is slightly larger than the exponent
in the theorem, and proceed in four steps.

\begin{enumerate}[label=\emph{Step \arabic*.},leftmargin=*,
 itemsep=2pt,wide]
\item\emph{Normalization}
(\S\ref{subsec:normalization}).
Rescaling \(P\) at a boundary point where \(|P|\) is maximal
produces \(G\) with \(G(1)=\norm G_\infty=1\) and the same residual.
Thus \(G\) solves the constant-coefficient equation \(G'=\mu G\),
for an explicit constant \(\mu\), up to an error of \(H^2\) norm at
most \(\e^{-c_nn}\).
\item\emph{Zero separation}
(\S\ref{subsec:separation}).
Integrating this perturbed equation along the segment from each zero
\(\beta\) of \(G\) to the maximum point \(1\) forces
\(|1-\beta|\geq c_n/2\).  Consequently the reciprocals
\(v_\ell=1/(1-\beta_\ell)\) satisfy \(|v_\ell|\leq2/c_n\), and their
average \(m_1=\frac1n\sum_\ell v_\ell\) is real and at least
\(\frac12\).
\item\emph{Higher moments}
(\S\ref{subsec:moments}).
Differentiating the logarithmic derivative of \(G\) at the maximum
point bounds each moment \(m_k=\frac1n\sum_\ell v_\ell^k\),
\(2\leq k\leq n+1\), by an explicit multiple of
\(n^{k-2}\e^{-c_nn}\).
\item\emph{The polynomial test}
(\S\ref{subsec:test}).
Averaging \(\Phi(v)=v(1-\frac{c_n}2v)^{K-1}\), with \(K\) chosen
optimally, over \(v_1,\ldots,v_n\): by Steps~2--3 the result is
close to \(m_1\), yet the contraction
\(|1-\frac{c_n}2v_\ell|<1\) keeps it below \(m_1\) by a fixed
factor---a contradiction.  The choice of the parameters of \(\Phi\)
is a purely computational lemma, proved in
Appendix~\ref{app:asymptotics}.
\end{enumerate}

As an application, Theorem~\ref{thm:stable} yields a negative answer
to a problem of Erd\H{o}s~\cite[p.~213]{Erdos1965}, recorded as
Problem~7.3 in Hayman's collection~\cite{Hayman1974} and catalogued
as Erd\H{o}s Problem 973~\cite{Bloom973}.

\begin{corollary}[Erd\H{o}s Problem 973]\label{cor:973}
There is no constant \(C>1\) with the following property: for every
\(n\geq2\), there exist \(z_1,\ldots,z_n\in\C\) with \(z_1=1\),
\(|z_j|\geq1\) for \(1\leq j\leq n\), and
\[
 \max_{2\leq k\leq n+1}
 \left|\sum_{j=1}^n z_j^k\right|<C^{-n}.
\]
\end{corollary}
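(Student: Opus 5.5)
We argue by contradiction and reduce the statement to Theorem~\ref{thm:stable}. Suppose some \(C>1\) works, fix \(n\), and take \(z_1=1,z_2,\ldots,z_n\) with \(|z_j|\geq1\) and \(|s_k|<C^{-n}\) for \(2\leq k\leq n+1\), where \(s_k=\sum_j z_j^k\). Pass to the reciprocals \(w_j=1/z_j\in\overline\D\) and set
\[
 P(t)=\prod_{j=1}^n(1-z_jt).
\]
Then \(P(0)=1\) and \(P\) has zeros \(w_j\in\overline\D\), so \(P\in\Pcal_n\). The logarithmic derivative satisfies \(P'/P=-\sum_{k\geq0}s_{k+1}t^k\) as a formal power series, so \(P'(0)=-s_1\). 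The plan is to show that \(P'-P'(0)P\) is small in \(H^2\) relative to \(\norm P_2\), giving \(\rho(P)\leq \mathrm{poly}(n)\,\widetilde C^{\,n}C^{-n}\) for some \(\widetilde C\) independent of \(C\). Replacing \(C\) by a large enough power would then contradict Theorem~\ref{thm:stable}, whose lower bound is \(\e^{-O(\sqrt n\log n)}\), far larger than any \(\e^{-cn}\).

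\textbf{Key identity.} Write \(P'=P\cdot(P'/P)\) and truncate:
\[
 P'(t)+s_1P(t)=-P(t)\sum_{k\geq1}s_{k+1}t^k .
\]
The left side is a polynomial of degree \(\leq n\), so only coefficients of \(t^0,\ldots,t^n\) matter. Comparing them (Newton's identities), the coefficient of \(t^m\) on the left equals \(-\sum_{k=1}^{m}s_{k+1}p_{m-k}\), with \(p_i\) the coefficients of \(P\). This involves only \(s_2,\ldots,s_{n+1}\), all of size \(<C^{-n}\). Hence
\[
 \norm{P'-P'(0)P}_2\leq C^{-n}\sqrt{n+1}\sum_i|p_i| \leq C^{-n}(n+1)\norm P_2 .
\]
The last step is Cauchy--Schwarz. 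This bound is already normalized by \(\norm P_2\), so no control of \(|z_j|\) from above is needed, and \(\rho(P)\leq (n+1)C^{-n}\).

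\textbf{Conclusion and main obstacle.} Since \((n+1)C^{-n}=\exp(-n\log C+\log(n+1))\) is eventually smaller than \(\exp[-\sqrt n(\log n+\log\frac2{\log2}+1)]\), Theorem~\ref{thm:stable} with \(\epsilon=1\) gives a contradiction for large \(n\). The main point to check carefully is the coefficient identity: that the degree-\(m\) coefficients with \(m\leq n\) of \(P\cdot\sum_k s_{k+1}t^k\) use only \(s_2,\ldots,s_{n+1}\). The index \(k+1\) runs up to \(m+1\leq n+1\), so the range \(2\leq k\leq n+1\) in the corollary matches exactly. The condition \(z_1=1\) is not needed for this reduction; it only rules out the trivial configuration in which all \(z_j\) are large.
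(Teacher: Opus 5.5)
Your proposal is correct and follows the paper's route. You take \(P(t)=\prod_j(1-z_jt)\in\Pcal_n\), use the truncated Newton identity to show that \(P'-P'(0)P\) is the degree-\(\le n\) part of \(-P\sum_{k\ge1}s_{k+1}t^k\) (so only \(s_2,\dots,s_{n+1}\) enter), and conclude \(\rho(P)\le (n+1)C^{-n}\), which contradicts Theorem~\ref{thm:stable} with \(\epsilon=1\) for large \(n\). The only difference is cosmetic: you bound the coefficients directly by Cauchy--Schwarz, whereas the paper uses \(\norm{\Pi_{\le n}(SP)}_2\le\norm S_\infty\norm P_2\) with \(S=\sum_{k=2}^{n+1}s_kt^{k-1}\) to get \(\rho(P)\le nM_n(z)\), and your aside about replacing \(C\) by a power is unnecessary.
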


The negative answer was first obtained, by a different method, by
Luo, Yang, and Zhu~\cite{LuoYangZhu2026}.
Section~\ref{sec:applications} derives Corollary~\ref{cor:973} from a
quantitative, subexponential lower bound for the power-sum maximum
(Corollary~\ref{cor:power}).

\section{Proof of the residual bound}\label{sec:proof}

\begin{proof}[Proof of Theorem~\ref{thm:stable}]
Fix \(\epsilon>0\), and put
\[
 a=\log2,
 \qquad
 \sigma=\log\frac2a+\epsilon;
\]
this choice of \(\sigma\) is what Lemma~\ref{lem:test-asymptotics}
below requires in the final step.
Suppose for contradiction that the conclusion fails.  Then there
exist a strictly increasing sequence \(n_j\to\infty\) and polynomials
\(P_j\in\Pcal_{n_j}\) such that
\[
 \rho(P_j)<
 \exp\!\left[-\sqrt{n_j}(\log n_j+\sigma)\right].
\]
For notational simplicity, relabel \(n_j\) and \(P_j\) as \(n\) and
\(P\), respectively, and set
\[
 c_n=\frac{\log n+\sigma}{\sqrt n}.
\]
All asymptotic notation below is for \(n\to\infty\) along this
relabeled sequence, with \(\epsilon\) fixed.  We then have
\begin{equation}\label{eq:bad}
 \rho(P)\leq\e^{-c_n n}.
\end{equation}
We shall derive a contradiction.

Two elementary estimates are used repeatedly.  First, Bernstein's
inequality on the unit circle: if \(Q\) is a polynomial of degree at
most \(n\), then
\begin{equation}\label{eq:bernstein}
 \norm{Q^{(m)}}_\infty
 \leq n(n-1)\cdots(n-m+1)\norm Q_\infty
 \leq n^m\norm Q_\infty
 \qquad(0\leq m\leq n).
\end{equation}
Second, the coefficient bound: if \(Q(t)=\sum_{k=0}^nq_kt^k\) and
\(|w|\leq1\), then, by the Cauchy--Schwarz inequality applied to the
coefficient vector,
\begin{equation}\label{eq:coefficient-bound}
 |Q(w)|\leq\sum_{k=0}^n|q_k|
 \leq\sqrt{n+1}\,\norm Q_2.
\end{equation}

\subsection{Maximum-point normalization}\label{subsec:normalization}

Choose \(\tau\in\partial\D\) such that
\(|P(\tau)|=\max_{|t|=1}|P(t)|\), and define
\begin{equation}\label{eq:normalization}
 G(w)=\frac{P(\tau w)}{P(\tau)},
 \qquad
 \mu=\tau P'(0),
 \qquad
 E=G'-\mu G.
\end{equation}
By the maximum-modulus principle,
\(|P(\tau)|\geq|P(0)|=1>0\), so \(G\) is well defined, and
\begin{equation}\label{eq:normG}
 G(1)=1,
 \qquad
 \norm G_\infty=1,
 \qquad
 \norm G_2\leq1.
\end{equation}
The \(k\)-th coefficient of \(G\) is \(\tau^k/P(\tau)\) times the
\(k\)-th coefficient of \(P\), and the \(k\)-th coefficient of \(E\)
is \(\tau^{k+1}/P(\tau)\) times the \(k\)-th coefficient of
\(P'-P'(0)P\).  Since \(|\tau|=1\), the factors \(\tau^k\) and
\(\tau^{k+1}\) leave \(\norm\cdot_2\) unchanged, and the common
scalar \(P(\tau)\) cancels in the ratio, so
\begin{equation}\label{eq:residual-invariance}
 \frac{\norm E_2}{\norm G_2}=\rho(P),
 \qquad
 \norm E_2=\rho(P)\norm G_2\leq\e^{-c_n n}.
\end{equation}

Let \(\beta_1,\ldots,\beta_n\) be the zeros of \(G\), counted with
multiplicity.  They lie in \(\overline\D\), and none equals \(1\)
because \(G(1)=1\).  Put \(d=G'(1)\).  Since
\(\theta\mapsto|G(\e^{i\theta})|^2\) is smooth and attains its
maximum at \(\theta=0\), and since \(G(1)=1\),
\[
 0=\left.\frac{d}{d\theta}|G(\e^{i\theta})|^2\right|_{\theta=0}
 =2\RePart\bigl(iG'(1)\overline{G(1)}\bigr)
 =-2\ImPart G'(1),
\]
so
\begin{equation}\label{eq:d-real}
 d\in\mathbb R.
\end{equation}
Since \(G\) has degree \(n\), inequality \eqref{eq:bernstein} with
\(m=1\) gives \(|d|\leq\norm{G'}_\infty\leq n\).  The polynomial
\(E=G'-\mu G\) has degree at most \(n\), so
\eqref{eq:coefficient-bound} and \eqref{eq:residual-invariance} yield
\begin{equation}\label{eq:mu-d}
 |\mu-d|=|\mu G(1)-G'(1)|=|E(1)|
 \leq\sqrt{n+1}\,\norm E_2
 \leq\sqrt{n+1}\,\e^{-c_n n}=o(1).
\end{equation}
In particular, \(|\mu-d|<1\) for all sufficiently large \(n\).

\subsection{Separation of the zeros}\label{subsec:separation}

Fix a zero \(\beta\) of \(G\), and write
\(\delta=1-\RePart\beta\geq0\).  Integrating the exact identity
\[
 (\e^{-\mu w}G(w))'=\e^{-\mu w}E(w)
\]
along the segment from \(\beta\) to \(1\), and using \(G(\beta)=0\)
and \(G(1)=1\), gives
\begin{equation}\label{eq:ode-integral}
 1=\int_\beta^1\e^{\mu(1-w)}E(w)\,dw.
\end{equation}
Parametrize this segment by
\(w=\beta+u(1-\beta)\), \(0\leq u\leq1\), so that
\(1-w=(1-u)(1-\beta)\).  Since \(d\in\mathbb R\) and
\(\RePart(1-\beta)=\delta\),
\[
 \RePart\bigl(\mu(1-w)\bigr)
 =(1-u)\bigl[d\delta+\RePart\bigl((\mu-d)(1-\beta)\bigr)\bigr]
 \leq(1-u)\bigl(|d|\delta+|\mu-d|\,|1-\beta|\bigr),
\]
and hence, by \(|d|\leq n\), \(|\mu-d|<1\), \(|1-\beta|\leq2\), and
\(\delta\geq0\),
\begin{equation}\label{eq:exponent}
 \RePart\bigl(\mu(1-w)\bigr)
 \leq(1-u)(n\delta+2)
 \leq n\delta+2.
\end{equation}
The segment lies in \(\overline\D\), so
\eqref{eq:coefficient-bound} gives
\(|E(w)|\leq\sqrt{n+1}\norm E_2\) on it.  Hence
\eqref{eq:ode-integral}, \eqref{eq:exponent}, and
\eqref{eq:residual-invariance} imply
\[
 1\leq|1-\beta|\,\e^{n\delta+2}\sqrt{n+1}\,\norm E_2
 \leq2\e^{n\delta+2}\sqrt{n+1}\,\e^{-c_n n}.
\]
Taking logarithms and dividing by \(n\),
\begin{equation}\label{eq:separation}
 \delta\geq
 c_n-\frac{2+\log(2\sqrt{n+1})}{n}.
\end{equation}
The subtracted term is \(O(n^{-1}\log n)=o(c_n)\), because
\(c_n\geq n^{-1/2}\log n\).  Consequently, for all sufficiently large
\(n\), every zero satisfies
\begin{equation}\label{eq:separation-half}
 1-\RePart\beta_\ell\geq\frac{c_n}{2}
 \qquad(1\leq\ell\leq n).
\end{equation}

Set
\[
 v_\ell=\frac1{1-\beta_\ell},
 \qquad
 m_k=\frac1n\sum_{\ell=1}^n v_\ell^k.
\]
Because \(|\beta_\ell|\leq1\), we have
\(2(1-\RePart\beta_\ell)-|1-\beta_\ell|^2=1-|\beta_\ell|^2\geq0\),
whence
\begin{equation}\label{eq:v-geometry}
 \RePart v_\ell
 =\frac{1-\RePart\beta_\ell}{|1-\beta_\ell|^2}
 \geq\frac12;
 \qquad
 \frac{|v_\ell|^2}{\RePart v_\ell}
 =\frac1{1-\RePart\beta_\ell}
 \leq\frac2{c_n},
\end{equation}
the second part by \eqref{eq:separation-half}.  Since
\(G(1)=1\ne0\), in a neighborhood of \(1\) the logarithmic derivative
of \(G\) is \(\sum_\ell(w-\beta_\ell)^{-1}\); evaluating at \(w=1\)
gives \(\sum_\ell v_\ell=G'(1)=d\).  Hence \(m_1=d/n\) is real by
\eqref{eq:d-real}, and, averaging the first part of
\eqref{eq:v-geometry},
\begin{equation}\label{eq:first-moment}
 m_1=\frac dn=\frac1n\sum_{\ell=1}^n\RePart v_\ell\geq\frac12.
\end{equation}

\subsection{Higher reciprocal-zero moments}\label{subsec:moments}

In a neighborhood of \(1\), define
\[
 F(w)=\frac{G'(w)}{G(w)}-\mu
 =\sum_{\ell=1}^n\frac1{w-\beta_\ell}-\mu
 =\frac{E(w)}{G(w)}.
\]
Since \(E=FG\), Leibniz's rule gives
\begin{equation}\label{eq:leibniz}
 E^{(m)}(1)=
 \sum_{r=0}^m\binom mr F^{(r)}(1)G^{(m-r)}(1).
\end{equation}
The coefficient bound \eqref{eq:coefficient-bound} gives
\(\norm E_\infty\leq\sqrt{n+1}\norm E_2\); combining this with
Bernstein's inequality \eqref{eq:bernstein}, the normalization
\eqref{eq:normG}, and the residual bound
\eqref{eq:residual-invariance} yields
\begin{equation}\label{eq:derivative-bounds}
 |E^{(m)}(1)|\leq n^m\sqrt{n+1}\,\e^{-c_n n},
 \qquad
 |G^{(m)}(1)|\leq n^m
 \qquad(0\leq m\leq n).
\end{equation}

Define
\[
 A_0=1,
 \qquad
 A_m=1+\sum_{r=0}^{m-1}\binom mr A_r
 \quad(m\geq1).
\]
We claim that
\begin{equation}\label{eq:Fderivative}
 |F^{(m)}(1)|
 \leq A_m n^m\sqrt{n+1}\,\e^{-c_n n}
 \qquad(0\leq m\leq n).
\end{equation}
For \(m=0\) this is \eqref{eq:derivative-bounds}, because
\(F(1)=E(1)\) by \(G(1)=1\).  For \(m\geq1\), solving
\eqref{eq:leibniz} for the term \(r=m\) (whose factor is
\(G(1)=1\)) and inserting \eqref{eq:derivative-bounds} and the
inductive bounds give
\[
 |F^{(m)}(1)|
 \leq|E^{(m)}(1)|
 +\sum_{r=0}^{m-1}\binom mr|F^{(r)}(1)|\,|G^{(m-r)}(1)|
 \leq\left(1+\sum_{r=0}^{m-1}\binom mr A_r\right)
 n^m\sqrt{n+1}\,\e^{-c_n n},
\]
which is \eqref{eq:Fderivative}.

By Lemma~\ref{lem:Am} in Appendix~\ref{app:asymptotics},
\begin{equation}\label{eq:Am-bound}
 A_m\leq\frac{2\,m!}{a^{m+1}}
 \qquad(m\geq0).
\end{equation}

For \(2\leq k\leq n+1\), differentiating the partial-fraction
formula for \(F\) \(k-1\) times eliminates the constant \(-\mu\) and
gives
\[
 F^{(k-1)}(1)=(-1)^{k-1}(k-1)!
 \sum_{\ell=1}^n v_\ell^k.
\]
Combining this identity with \eqref{eq:Fderivative} and
\eqref{eq:Am-bound} yields
\begin{equation}\label{eq:moment-bound}
 |m_k|
 =\frac{|F^{(k-1)}(1)|}{n(k-1)!}
 \leq\frac{2}{a^k}
 n^{k-2}\sqrt{n+1}\,\e^{-c_n n}
 \qquad(2\leq k\leq n+1).
\end{equation}

\subsection{The optimized polynomial test}\label{subsec:test}

With \(c_n\) as defined above, put
\[
 y=c_n n,
 \qquad
 H=\log\Bigl(1+\frac y{2a}\Bigr),
 \qquad
 K=\left\lfloor\frac{y-2\log n}{H}\right\rfloor,
 \qquad
 \vartheta=\sqrt{1-\frac{c_n}4}.
\]
The dependence of \(y,H,K,\vartheta\) on \(n\) is suppressed.
By Lemma~\ref{lem:test-asymptotics} in
Appendix~\ref{app:asymptotics}, \(K\sim2\sqrt n\)---so that
\(2\leq K\leq n\) for all sufficiently large \(n\)---and
\begin{equation}\label{eq:contraction-value}
 \frac2{\sqrt{c_n}}\,\vartheta^{K-1}
 \longrightarrow\e^{-\epsilon/2}<1
 \qquad(n\to\infty).
\end{equation}
Consider
\[
 \Phi(v)=v\left(1-\frac{c_n}{2}v\right)^{K-1}.
\]
By \eqref{eq:v-geometry},
\[
 \left|1-\frac{c_n}{2}v_\ell\right|^2
 =1-c_n\RePart v_\ell+\frac{c_n^2}{4}|v_\ell|^2
 \leq1-c_n\RePart v_\ell+\frac{c_n}{2}\RePart v_\ell
 =1-\frac{c_n}{2}\RePart v_\ell
 \leq1-\frac{c_n}{4}.
\]
Hence
\(|\Phi(v_\ell)|\leq|v_\ell|\vartheta^{K-1}\) for every
\(\ell\).  By the Cauchy--Schwarz inequality, the second part of
\eqref{eq:v-geometry}, and \eqref{eq:first-moment},
\[
 \left|\frac1n\sum_{\ell=1}^n\Phi(v_\ell)\right|
 \leq\vartheta^{K-1}
 \left(\frac1n\sum_{\ell=1}^n|v_\ell|^2\right)^{1/2}
 \leq\vartheta^{K-1}
 \left(\frac2{c_n}\,m_1\right)^{1/2}.
\]
Dividing by \(m_1\) and using \(m_1\geq\tfrac12\),
\begin{equation}\label{eq:test-contraction}
 \frac1{m_1}\left|
 \frac1n\sum_{\ell=1}^n\Phi(v_\ell)
 \right|
 \leq\left(\frac2{c_nm_1}\right)^{1/2}\vartheta^{K-1}
 \leq\frac2{\sqrt{c_n}}\,\vartheta^{K-1}.
\end{equation}

Next, write
\[
 \Phi(v)=v+\sum_{k=2}^K \lambda_kv^k,
 \qquad
 \lambda_k=(-1)^{k-1}\binom{K-1}{k-1}
 \left(\frac{c_n}{2}\right)^{k-1}.
\]
Since \(c_nn=y\) and
\((c_n/2)^{k-1}(1/a)^{k-1}n^{k-1}=(y/(2a))^{k-1}\),
the moment estimate \eqref{eq:moment-bound} gives
\begin{equation}\label{eq:weighted-moments}
\begin{aligned}
 \left|\sum_{k=2}^K \lambda_km_k\right|
 &\leq\frac2a\frac{\sqrt{n+1}}n\e^{-y}
 \sum_{k=2}^K\binom{K-1}{k-1}\left(\frac y{2a}\right)^{k-1}\\
 &\leq\frac2a\frac{\sqrt{n+1}}n\e^{-y}
 \left(1+\frac y{2a}\right)^{K-1}\\
 &\leq\frac2a\frac{\sqrt{n+1}}{n^3}
 \longrightarrow0,
\end{aligned}
\end{equation}
where the middle inequality is the binomial theorem and the last one
follows from \((K-1)H\leq y-2\log n\), that is,
\((1+y/(2a))^{K-1}\leq\e^{y}n^{-2}\).

Averaging the binomial expansion of \(\Phi\) over
\(v_1,\ldots,v_n\) gives
\[
 \frac1n\sum_{\ell=1}^n\Phi(v_\ell)
 =m_1+\sum_{k=2}^K \lambda_km_k.
\]
By \eqref{eq:first-moment} and \eqref{eq:weighted-moments},
\[
 \frac1{m_1}\left|\frac1n\sum_{\ell=1}^n\Phi(v_\ell)\right|
 \geq1-\frac1{m_1}\left|\sum_{k=2}^K \lambda_km_k\right|
 \geq1-2\left|\sum_{k=2}^K \lambda_km_k\right|
 \longrightarrow1.
\]
This contradicts \eqref{eq:test-contraction} and
\eqref{eq:contraction-value}, which force
\[
 \limsup_{n\to\infty}
 \frac1{m_1}\left|
 \frac1n\sum_{\ell=1}^n\Phi(v_\ell)
 \right|
 \leq\e^{-\epsilon/2}<1.
\]
Hence no such sequence exists, so for all sufficiently large \(n\),
every \(P\in\Pcal_n\) satisfies
\[
 \rho(P)\geq
 \exp\!\left[-\sqrt n\left(
 \log n+\log\frac2{\log2}+\epsilon\right)\right].
\]
Taking the infimum over \(P\in\Pcal_n\) proves
Theorem~\ref{thm:stable}.
\end{proof}

The residual infimum \(r_n\) also has a trivial upper bound, and the
two bounds together determine its exponential scale.

\begin{corollary}\label{cor:root-asymptotic}
\(r_n\leq n/\sqrt2\) for every \(n\geq2\), and
\[
 r_n^{1/n}\longrightarrow1
 \qquad(n\to\infty).
\]
\end{corollary}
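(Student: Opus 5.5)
Two separate things have to be shown: the explicit upper bound \(r_n\leq n/\sqrt2\), and the limit. The limit then follows by squeezing this upper bound against Theorem~\ref{thm:stable}.

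For the upper bound we use a single test polynomial, \(P(t)=1+t^n\). It has \(P(0)=1\), degree \(n\), and all zeros on the unit circle, so \(P\in\Pcal_n\). Since \(n\geq2\), we have \(P'(0)=0\). Hence the residual numerator is \(P'(t)=nt^{n-1}\), with \(\norm{P'}_2=n\), while \(\norm P_2=\sqrt2\). Therefore \(\rho(P)=n/\sqrt2\), and so \(r_n\leq n/\sqrt2\).

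For the limit, the upper bound gives \(r_n^{1/n}\leq (n/\sqrt2)^{1/n}=\exp\bigl((\log n-\tfrac12\log2)/n\bigr)\to1\). For the lower bound, fix \(\epsilon=1\) in Theorem~\ref{thm:stable}. For \(n\geq N_1\) this gives
\[
 r_n^{1/n}\geq\exp\!\left[-\frac{\log n+\log(2/\log2)+1}{\sqrt n}\right]\longrightarrow1,
\]
because \((\log n)/\sqrt n\to0\). In particular \(r_n>0\), so taking the \(n\)-th root is legitimate. Squeezing the two bounds gives \(r_n^{1/n}\to1\).

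There is no real obstacle here. The only points to check are that the test polynomial genuinely has \(P'(0)=0\), which is where \(n\geq2\) is used, and that its zeros, being on the circle, are allowed by the closed-disk convention.
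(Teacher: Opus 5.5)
Your proof is correct and matches the paper's argument: the paper uses the test polynomial \(1-t^n\) instead of your \(1+t^n\), which gives the same residual \(n/\sqrt2\). It then likewise squeezes \(r_n^{1/n}\) between this upper bound and the lower bound from Theorem~\ref{thm:stable}.
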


\begin{proof}
For \(P(t)=1-t^n\) we have \(P\in\Pcal_n\), \(P'(0)=0\), and
\[
 \rho(P)=\frac{\norm{P'}_2}{\norm P_2}=\frac n{\sqrt2},
\]
so \(r_n\leq n/\sqrt2\), and hence
\(\limsup_{n\to\infty}r_n^{1/n}\leq1\).  Theorem~\ref{thm:stable}
gives \(\liminf_{n\to\infty}r_n^{1/n}\geq1\).
\end{proof}

\section{Application to exterior power sums}\label{sec:applications}

For \(z=(z_1,\ldots,z_n)\in\C^n\), put
\[
 p_k(z)=\sum_{j=1}^n z_j^k,
 \qquad
 M_n(z)=\max_{2\leq k\leq n+1}|p_k(z)|.
\]
The link between power sums and the residual is the following exact
identity.

\begin{proposition}[Projected Newton identity]\label{prop:reduction}
Let \(z_1,\ldots,z_n\in\C\) satisfy \(|z_j|\geq1\), and set
\(P(t)=\prod_{j=1}^n(1-z_jt)\).
Then \(P\in\Pcal_n\) and
\[
 \rho(P)\leq nM_n(z).
\]
\end{proposition}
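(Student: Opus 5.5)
The plan is to verify membership in \(\Pcal_n\) directly and then compute the residual through the logarithmic derivative of \(P\), which ties it to the power sums (Newton's identities).

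\emph{Membership.} The zeros of \(P(t)=\prod_j(1-z_jt)\) are \(1/z_j\), and these lie in \(\overline\D\) because \(|z_j|\geq1\). Moreover \(P(0)=1\). The leading coefficient is \(\prod_j(-z_j)\ne0\), so \(P\) has degree exactly \(n\). Hence \(P\in\Pcal_n\).

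\emph{The identity.} As formal power series,
\[
 \frac{P'(t)}{P(t)}=-\sum_j\frac{z_j}{1-z_jt}=-\sum_{k\geq0}p_{k+1}(z)t^k .
\]
In particular \(P'(0)=-p_1(z)\). Therefore
\[
 P'-P'(0)P=P\Bigl(\frac{P'}{P}+p_1\Bigr)=-P(t)\sum_{k\geq1}p_{k+1}t^k .
\]
The left side is a polynomial of degree at most \(n-1\). Its coefficient of \(t^m\), for \(0\leq m\leq n-1\), involves only \(p_2,\dots,p_{m+1}\) and the coefficients of \(P\), so
\[
 P'-P'(0)P=-\Pi_{n-1}\Bigl(P(t)\,S(t)\Bigr),
 \qquad
 S(t)=\sum_{k=1}^{n-1}p_{k+1}t^k ,
\]
where \(\Pi_{n-1}\) denotes truncation to degree at most \(n-1\). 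This is the ``projected Newton identity''. Only \(p_2,\dots,p_n\) actually enter, which lies inside the range \(2\leq k\leq n+1\) defining \(M_n\).

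\emph{The norm bound.} Truncation does not increase the \(H^2\) norm. Write \(P=\sum_i q_it^i\). Then
\[
 \Pi_{n-1}(PS)=\sum_{k=1}^{n-1}p_{k+1}\,t^k\,\Pi_{n-1-k}(P),
\]
a sum of \(n-1\) terms. Each term has norm \(\norm{t^k\Pi_{n-1-k}P}_2\leq\norm P_2\), since shifting by \(t^k\) preserves the norm and truncation can only decrease it. By the triangle inequality,
\[
 \norm{P'-P'(0)P}_2\leq\sum_{k=1}^{n-1}|p_{k+1}|\,\norm P_2\leq(n-1)M_n(z)\norm P_2 .
\]
Dividing by \(\norm P_2\) gives \(\rho(P)\leq(n-1)M_n(z)\leq nM_n(z)\).

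The only delicate step is the bookkeeping in the identity: checking that the truncation at degree \(n-1\) is exact, so that the full series \(P\sum_{k\geq1}p_{k+1}t^k\) agrees with the polynomial \(P'-P'(0)P\) in all coefficients of degree at most \(n-1\). This holds because both sides are equal as formal power series and the left side has degree at most \(n-1\). Everything else is routine.
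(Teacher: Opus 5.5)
There is a genuine error in your identity step. You assert that \(P'-P'(0)P\) has degree at most \(n-1\), but \(P'(0)P\) has degree \(n\). The \(t^n\)-coefficient of \(P'-P'(0)P\) is \(p_1(z)\prod_j(-z_j)\), which is nonzero whenever \(p_1(z)\neq0\). So truncating the (correct) formal identity \(P'-P'(0)P=-P\sum_{k\geq1}p_{k+1}t^k\) at degree \(n-1\) discards the top coefficient \(-\sum_{k=1}^{n}p_{k+1}q_{n-k}\), and that coefficient contains \(p_{n+1}\) (since \(q_0=1\)). The correct projected identity is
\[
 P'-P'(0)P=-\Pi_{\leq n}(PS),\qquad S(t)=\sum_{k=1}^{n}p_{k+1}t^k .
\]
This is exactly why \(M_n\) ranges over \(2\leq k\leq n+1\); your remark that only \(p_2,\dots,p_n\) enter should have been a warning sign.

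The bound \(\rho(P)\leq(n-1)M_n(z)\) that you derive is in fact false. For \(n=1\), \(P(t)=1-zt\) gives \(P'-P'(0)P=-z^2t\), so \(\rho(P)=|z|^2/\sqrt{1+|z|^2}>0=(n-1)M_1(z)\). The failure is not confined to \(n=1\): for \(n=2\) and \(z=(1,\e^{2\pi i/5})\), one computes \(\rho(P)=\sqrt{6/(7+\sqrt5)}\approx0.81\), while \(M_2(z)=(\sqrt5-1)/2\approx0.62\).

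The repair is immediate, and the rest of your argument survives. With truncation at degree \(n\), \(\Pi_{\leq n}(PS)=\sum_{k=1}^{n}p_{k+1}t^k\,\Pi_{\leq n-k}(P)\) has \(n\) terms, each of \(H^2\) norm at most \(|p_{k+1}|\norm P_2\). Hence \(\rho(P)\leq\sum_{k=2}^{n+1}|p_k|\leq nM_n(z)\), which is the stated bound.

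Corrected this way, your proof is essentially the paper's. The paper obtains the same identity from a finite geometric expansion of \(z_j/(1-z_jt)\) whose remainder is divisible by \(t^{n+1}\); this is equivalent to your formal-series argument. It then bounds the norm by \(\norm{SP}_2\leq\norm S_\infty\norm P_2\leq\sum_k|p_k|\,\norm P_2\) via Parseval, rather than by your term-by-term triangle inequality. The two estimates give the same constant.
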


\begin{proof}
The zeros of \(P\) are \(1/z_j\in\overline\D\), and \(P(0)=1\), so
\(P\in\Pcal_n\).  Abbreviate \(p_k=p_k(z)\), and put
\[
 R=\left(\max_{1\leq j\leq n}|z_j|\right)^{-1}>0.
\]
For \(|t|<R\), we have \(P(t)\neq0\), and logarithmic
differentiation together with the finite geometric identity
\[
 \frac{z_j}{1-z_jt}
 =\sum_{k=1}^{n+1}z_j^kt^{k-1}
 +\frac{z_j^{n+2}t^{n+1}}{1-z_jt}
\]
gives
\[
 \frac{P'(t)}{P(t)}
 =-\sum_{j=1}^n\frac{z_j}{1-z_jt}
 =-\sum_{k=1}^{n+1}p_kt^{k-1}
   -t^{n+1}\sum_{j=1}^n\frac{z_j^{n+2}}{1-z_jt}.
\]
Multiplying by \(P(t)\) yields
\[
 P'(t)+P(t)\sum_{k=1}^{n+1}p_kt^{k-1}
 =-t^{n+1}\sum_{j=1}^nz_j^{n+2}
 \prod_{\substack{1\leq i\leq n\\i\ne j}}(1-z_it),
 \qquad |t|<R.
\]
Both sides are polynomials, so this identity holds for every
\(t\in\C\).  Let \(\Pi_{\leq n}\) denote coefficient projection onto
degrees at most \(n\), and set
\(S(t)=\sum_{k=2}^{n+1}p_kt^{k-1}\).  The right-hand side above is
divisible by \(t^{n+1}\), so applying \(\Pi_{\leq n}\) and using
\(P'(0)=-p_1\) give
\begin{equation}\label{eq:projection}
 P'-P'(0)P=-\Pi_{\leq n}(SP).
\end{equation}
The projection \(\Pi_{\leq n}\) does not increase the \(H^2\) norm,
and Parseval's identity gives
\(\norm{SP}_2\leq\norm S_\infty\norm P_2\).  Hence
\[
 \rho(P)
 =\frac{\norm{\Pi_{\leq n}(SP)}_2}{\norm P_2}
 \leq\frac{\norm{SP}_2}{\norm P_2}
 \leq\norm S_\infty
 \leq\sum_{k=2}^{n+1}|p_k|
 \leq nM_n(z).
\qedhere
\]
\end{proof}

Combining Proposition~\ref{prop:reduction} with
Theorem~\ref{thm:stable} gives a quantitative lower bound for
exterior power sums.

\begin{corollary}[Exterior power sums]\label{cor:power}
For every \(\epsilon>0\), there exists \(N_\epsilon\) such that, for
every integer \(n\geq N_\epsilon\) and every
\(z_1,\ldots,z_n\in\C\) satisfying \(|z_j|\geq1\)
for \(1\leq j\leq n\),
\[
 M_n(z)
 \geq\frac1n\exp\!\left[-\sqrt n\left(
 \log n+\log\frac2{\log2}+\epsilon\right)\right].
\]
In particular,
\(M_n(z)\geq\exp(-(1+o(1))\sqrt n\log n)\)
uniformly over such configurations.
\end{corollary}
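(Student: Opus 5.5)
The plan is to chain Proposition~\ref{prop:reduction} with Theorem~\ref{thm:stable}; the corollary should need nothing beyond this combination.

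Fix \(\epsilon>0\) and let \(N_\epsilon\) be the threshold from Theorem~\ref{thm:stable}. Take any \(n\geq N_\epsilon\) and any \(z_1,\ldots,z_n\) with \(|z_j|\geq1\). Form \(P(t)=\prod_j(1-z_jt)\). By Proposition~\ref{prop:reduction}, \(P\in\Pcal_n\) and \(\rho(P)\leq nM_n(z)\). Since \(P\in\Pcal_n\), we also have \(\rho(P)\geq r_n\). Theorem~\ref{thm:stable} then gives
\[
 nM_n(z)\geq r_n\geq\exp\!\left[-\sqrt n\left(\log n+\log\frac2{\log2}+\epsilon\right)\right].
\]
Dividing by \(n\) yields the displayed bound. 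The threshold \(N_\epsilon\) does not depend on the configuration \(z\), so the bound is uniform over all admissible \(z\).

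For the ``in particular'' statement, rewrite the factor \(1/n\) as \(\exp(-\log n)\). Since \(\log n=o(\sqrt n\log n)\), and the constant \(\log(2/\log2)+\epsilon\) is also \(o(\log n)\), the total exponent is \(-(1+o(1))\sqrt n\log n\). To get a single \(o(1)\) that tends to zero, use a diagonal argument: for each \(\epsilon=1/j\) the thresholds \(N_{1/j}\) exist, so one can choose \(\epsilon_n\to0\) slowly enough that \(n\geq N_{\epsilon_n}\) for all large \(n\).

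There is no real obstacle here. The only point needing care is the uniformity over configurations and the passage from "for every \(\epsilon\)" to a single \(o(1)\), and the diagonal choice of \(\epsilon_n\) handles that.
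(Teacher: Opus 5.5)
Your proof is correct and is the paper's argument: Proposition~\ref{prop:reduction} gives \(r_n\leq\rho(P)\leq nM_n(z)\), and Theorem~\ref{thm:stable} finishes, uniformly in \(z\). The diagonal choice of \(\epsilon_n\) is harmless but unnecessary, since for any single fixed \(\epsilon\) the extra terms \(\log n+\sqrt n\,(\log\frac{2}{\log 2}+\epsilon)\) are already \(o(\sqrt n\log n)\).
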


\begin{proof}
Proposition~\ref{prop:reduction} and the definition of \(r_n\) give
\[
 r_n\leq\rho(P)\leq nM_n(z),
\]
so \(M_n(z)\geq r_n/n\).  The conclusion follows from
Theorem~\ref{thm:stable}.
\end{proof}

For comparison, Tur\'an's first main theorem gives, in the present
notation,
\[
 M_n(z)\geq n\left(\frac{n}{2\e(n+1)}\right)^{n-1}
 =(2\e)^{-(1+o(1))n};
\]
see \cite[pp.~17--18]{VanderPoorten1970} and \cite{Turan1984}.
Luo, Yang, and Zhu~\cite{LuoYangZhu2026}
proved that \(M_n(z)>\e^{-\lambda n}\) for every fixed \(\lambda>0\)
and all sufficiently large \(n\).  Corollary~\ref{cor:power} replaces
the linear exponent by \(\sqrt n\log n\).

\begin{proof}[Proof of Corollary~\ref{cor:973}]
Suppose such a constant \(C>1\) exists.  Apply
Corollary~\ref{cor:power} with \(\epsilon=1\): for all sufficiently
large \(n\), every configuration with \(|z_j|\geq1\)---in particular
every configuration with the additional constraint \(z_1=1\)---satisfies
\[
 M_n(z)
 \geq\frac1n\exp\!\left[-\sqrt n\left(
 \log n+\log\frac2{\log2}+1\right)\right]
 >C^{-n},
\]
the last inequality for all sufficiently large \(n\).  This
contradicts the requirement \(M_n(z)<C^{-n}\) for every \(n\geq2\).
\end{proof}

\section{Further questions}

Theorem~\ref{thm:stable} determines a subexponential lower scale for
\(r_n\), but the trivial witness \(1-t^n\) in
Corollary~\ref{cor:root-asymptotic} is far from matching it.
Determining the true order of \(r_n\), or constructing polynomials
whose residuals approach the lower scale, remains open.  Natural
variants replace \(H^2\) by \(H^p\), use weighted coefficient norms,
or constrain the zeros to a smaller disk.  The proof of
Theorem~\ref{thm:stable} is effective in principle, but we have not
computed an explicit threshold \(N_\epsilon\); making the
\(o(1)\) term explicit is a further natural problem.

Likewise, Corollary~\ref{cor:power} leaves open the finer behaviour
of the exterior power-sum maximum: the present proof does not decide
whether \(M_n(z)\) can, over configurations with \(|z_j|\geq1\),
decay to zero at all as \(n\to\infty\), or must remain bounded away
from zero.

\section*{Acknowledgments}

We thank Bo-Yong Chen, Jiawen Zhang, Weiyuan Qiu, and Jun Wang for
helpful advice concerning the publication of this work, and Hongwei Lou
for detailed comments on an earlier version of the manuscript that
improved the presentation of the main result and clarified the proof
structure.  Qihang Wang thanks his doctoral advisor, Weinan E, for his
encouragement and support.  We also acknowledge Gewu Intelligence Lab
for providing the collaborative research environment in which this
project was developed.

Kun Chen and Xiaojun Tan are supported by the Strategic Priority
Research Program of the Chinese Academy of Sciences under Grant
No.~XDB1680102.

The authors declare that they have no conflicts of interest.

\section*{Data access statement}

Data sharing is not applicable to this article, as no datasets were
generated or analysed in this work.

\section*{Disclosure of automated assistance}

OpenAI GPT-5.6 Sol was used during proof exploration and drafting.
Anthropic Claude Fable 5, operating in an agentic workflow, was used
for adversarial checking, literature comparison, and language editing.
The authors independently verified all resulting mathematical and
bibliographic claims.  The argument in this paper is self-contained,
no model output is used as a mathematical premise, and the named
authors assume full responsibility for the final manuscript, including
every statement, proof, citation, and priority claim.

\appendix

\section{Two elementary computations}\label{app:asymptotics}

This appendix contains the two elementary computations used in the
proof of Theorem~\ref{thm:stable}; neither involves the polynomial
\(P\).

\begin{lemma}\label{lem:Am}
Let \(A_0=1\), let \(A_m=1+\sum_{r=0}^{m-1}\binom mrA_r\) for
\(m\geq1\), and let \(a=\log2\).  Then
\[
 A_m=\sum_{q=1}^\infty\frac{q^m}{2^q}
 \qquad\text{and}\qquad
 A_m\leq\frac{2\,m!}{a^{m+1}}
 \qquad(m\geq0).
\]
\end{lemma}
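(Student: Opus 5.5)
The plan has two parts: first identify \(A_m\) with the series \(B_m:=\sum_{q\geq1}q^m2^{-q}\) by showing that \(B_m\) satisfies the same recursion, and then bound \(B_m\) by comparing the series with an integral.

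\emph{Step 1: the identity.} The series \(B_m\) converges absolutely for every \(m\), and \(B_0=\sum_{q\geq1}2^{-q}=1=A_0\). For \(m\geq1\) I shift the index \(q=p+1\) and expand binomially:
\[
 B_m=\sum_{p\geq0}\frac{(p+1)^m}{2^{p+1}}
 =\frac12\sum_{p\geq0}\frac{1}{2^{p}}\sum_{r=0}^m\binom mr p^r
 =\frac12\Bigl(\sum_{p\geq0}2^{-p}p^0+\sum_{r=1}^m\binom mr 2B_r\Bigr).
\]
Here the \(r=0\) term gives \(\sum_{p\geq0}2^{-p}=2\), and for \(r\geq1\) we have \(\sum_{p\geq0}p^r2^{-p}=2B_r\). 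Therefore
\[
 B_m=1+\sum_{r=1}^m\binom mr B_r=1+B_m+\sum_{r=1}^{m-1}\binom mr B_r .
\]
Taken by itself this cancels \(B_m\) and gives nothing, so the shift has to be organized more carefully. The clean way is to move the \(B_m\) on the right to the left-hand side from the start, i.e.\ work with \(B_m-\tfrac12\cdot 2B_m\). I will redo the bookkeeping with generating functions instead, which avoids this issue entirely.

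Let \(f(x)=\sum_m A_m x^m/m!\). The recursion reads \(2A_m=1+\sum_{r=0}^m\binom mr A_r\) for \(m\geq1\). At \(m=0\) the same formula gives \(2=1+1\), so it holds for all \(m\geq0\). In exponential generating functions this says \(2f=e^x+e^xf\), hence
\[
 f(x)=\frac{e^x}{2-e^x}=\sum_{q\geq1}2^{-q}e^{qx}.
\]
Comparing coefficients of \(x^m/m!\) gives \(A_m=\sum_{q\geq1} q^m2^{-q}\). This derivation is valid as formal power series, and also analytically for \(|x|<\log 2\).

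\emph{Step 2: the bound.} Write \(g(q)=q^m e^{-aq}\), since \(2^{-q}=e^{-aq}\). The function \(g\) is unimodal on \([0,\infty)\), increasing up to \(q^*=m/a\) and decreasing after. For a unimodal nonnegative function, the standard comparison is
\[
 \sum_{q\geq1}g(q)\leq\int_0^\infty g+\max g .
\]
The integral is exactly \(\int_0^\infty q^m e^{-aq}\,dq=m!/a^{m+1}\). It therefore suffices to show
\[
 \max g=(m/(ae))^m\leq m!/a^{m+1} .
\]
This is equivalent to \(a\,m^m e^{-m}\leq m!\). That holds because \(m!\geq m^m e^{-m}\) (from \(e^m\geq m^m/m!\)) and \(a<1\). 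For \(m=0\) the inequality reads \(1\leq 1/a\), which is also fine. Summing the two contributions gives \(A_m\leq 2\,m!/a^{m+1}\).

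The main obstacle is only the bookkeeping in Step 1, which the generating-function identity \(f=e^x/(2-e^x)\) handles cleanly. The unimodal sum–integral comparison in Step 2 is routine: on each side of the mode, each term is bounded by the integral of \(g\) over an adjacent unit interval, except for one term, which is bounded by \(\max g\).
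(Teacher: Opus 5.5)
Your proof is correct, and its first half (the exponential generating function identity $e^x/(2-e^x)=\sum_{q\ge1}2^{-q}e^{qx}$) is exactly the paper's argument.

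\emph{A correction to the index-shift attempt you abandoned.} For $r\ge1$ one has $\sum_{p\ge0}p^r2^{-p}=B_r$, not $2B_r$, because the $p=0$ term vanishes. With this fixed, the shift gives $B_m=1+\tfrac12\sum_{r=1}^m\binom mr B_r$. Solving for $B_m$ and using $B_0=1$ yields precisely $B_m=1+\sum_{r=0}^{m-1}\binom mr B_r$. So the direct route does work, and it needs no generating functions. Do not leave the false identity in the write-up.

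\emph{The bound.} Here you take a genuinely different route from the paper.
\begin{itemize}
\item The paper uses the pointwise comparison $q^me^{-aq}\le(x+1)^me^{-ax}$ on $[q-1,q]$. This gives $A_m\le\int_0^\infty(x+1)^me^{-ax}\,dx=\frac{m!}{a^{m+1}}\sum_{j=0}^m\frac{a^j}{j!}\le e^{a}\frac{m!}{a^{m+1}}$. It needs one pointwise inequality and no knowledge of where the peak is, and the constant $2$ arises as $e^{\log2}$.
\item You use the unimodal sum--integral comparison, where $2=1+1$ (integral plus peak term). The cost is the bookkeeping around the mode $m/a$: of the two terms straddling it, the smaller is absorbed into the integral over the unit interval containing the mode, and the larger into $\max g$. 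You also need the extra inequality $m!\ge(m/e)^m$.
\end{itemize}
In exchange, your bound is sharper. Since $a\,m^me^{-m}/m!\sim a/\sqrt{2\pi m}$, it gives $A_m\le\bigl(1+O(m^{-1/2})\bigr)m!/a^{m+1}$. This is asymptotically exact, because $e^x/(2-e^x)$ has a simple pole at $x=a$ with residue $-1$, so $A_m\sim m!/a^{m+1}$. The paper's bound is off by a factor $2$ for large $m$. Only the base $1/a$ matters in the application, so both suffice.
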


\begin{proof}
The recursion is equivalent to
\(\sum_{r=0}^m\binom mrA_r=2A_m-1\) for every \(m\geq0\);
multiplying by \(z^m/m!\) and summing, the exponential generating
function \(f(z)=\sum_{m\geq0}A_mz^m/m!\) satisfies
\(\e^zf(z)=2f(z)-\e^z\), so that
\[
 f(z)
 =\frac{\e^z}{2-\e^z}
 =\sum_{q=1}^\infty\frac{\e^{qz}}{2^q},
 \qquad\text{and hence}\qquad
 A_m=\sum_{q=1}^\infty\frac{q^m}{2^q}.
\]
Since \(q^m2^{-q}=q^m\e^{-aq}\leq(x+1)^m\e^{-ax}\) for
\(x\in[q-1,q]\), summation over \(q\geq1\) gives
\[
 A_m
 \leq\int_0^\infty(x+1)^m\e^{-ax}\,dx
 =\sum_{r=0}^m\binom mr\frac{r!}{a^{r+1}}
 =\frac{m!}{a^{m+1}}\sum_{q=0}^m\frac{a^q}{q!}
 \leq\frac{m!}{a^{m+1}}\,\e^{a}
 =\frac{2m!}{a^{m+1}}.
 \qedhere
\]
\end{proof}

\begin{lemma}\label{lem:test-asymptotics}
Let \(a=\log2\), and let \(\sigma\in\mathbb R\) be fixed.  For each
integer \(n\geq2\), define
\[
 c_n=\frac{\log n+\sigma}{\sqrt n},
 \qquad
 y=c_nn,
 \qquad
 H=\log\Bigl(1+\frac y{2a}\Bigr),
 \qquad
 K=\left\lfloor\frac{y-2\log n}{H}\right\rfloor,
 \qquad
 \vartheta=\sqrt{1-\frac{c_n}4}.
\]
As in Section~\ref{subsec:test}, the dependence of
\(y,H,K,\vartheta\) on \(n\) is suppressed.
Then \(K\sim2\sqrt n\) and
\[
 \frac2{\sqrt{c_n}}\,\vartheta^{K-1}
 \longrightarrow
 \exp\!\left[-\left(\frac\sigma2+\frac12\log(2a)-\log2\right)\right]
 \qquad(n\to\infty).
\]
In particular, for \(\sigma=\log(2/a)+\epsilon\) the limit equals
\(\e^{-\epsilon/2}\).
\end{lemma}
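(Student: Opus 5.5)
The proof is a direct asymptotic expansion. We compute \(K\) first, then take the logarithm of \(\frac{2}{\sqrt{c_n}}\vartheta^{K-1}\) and keep every term that does not vanish as \(n\to\infty\).

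\emph{Step 1: \(y\) and \(H\).} Write \(L=\log n\). Then
\[
 y=c_nn=\sqrt n\,(L+\sigma).
\]
Since \(y\to\infty\), we have \(H=\log(y/(2a))+O(1/y)\). Moreover \(\log y=\tfrac12L+\log(L+\sigma)\), so
\[
 H=\tfrac12L+\log L-\log(2a)+o(1).
\]

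\emph{Step 2: \(K\).} We have \(K=(y-2L)/H+O(1)\), and \(2L/H=O(1)\), so \(K=y/H+O(1)\). Now compute
\[
 \frac yH=\frac{\sqrt n\,(L+\sigma)}{\tfrac12L+\log L-\log(2a)+o(1)}
 =2\sqrt n\,\frac{1+\sigma/L}{1+(2\log L-2\log(2a)+o(1))/L}.
\]
This already gives \(K\sim2\sqrt n\). For the limit we need the next-order term, so we keep the \(1/L\) corrections:
\[
 K=2\sqrt n\Bigl(1+\frac{\sigma-2\log L+2\log(2a)+o(1)}{L}+O\Bigl(\frac{(\log L)^2}{L^2}\Bigr)\Bigr).
\]

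\emph{Step 3: the logarithm of the target.} Using \(\log\vartheta=\tfrac12\log(1-c_n/4)=-c_n/8+O(c_n^2)\),
\[
 \log\Bigl(\frac{2}{\sqrt{c_n}}\vartheta^{K-1}\Bigr)=\log2-\tfrac12\log c_n-(K-1)\frac{c_n}{8}+O(Kc_n^2).
\]
The error term is negligible, since \(Kc_n^2\sim2(\log n)^2/\sqrt n\to0\). For the main term,
\[
 -\tfrac12\log c_n=\tfrac14L-\tfrac12\log(L+\sigma)=\tfrac14L-\tfrac12\log L+o(1).
\]
Next, \(c_nK/8=\tfrac{\sqrt n}{4}\,c_n(1+\cdots)\) with \(\sqrt n\,c_n=L+\sigma\). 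Expanding,
\[
 \frac{c_nK}{8}=\frac{L+\sigma}{4}\Bigl(1+\frac{\sigma-2\log L+2\log(2a)}{L}+o\bigl(L^{-1}\bigr)\Bigr)=\tfrac14L+\tfrac{\sigma}{2}-\tfrac12\log L+\tfrac12\log(2a)+o(1).
\]
The remaining \(O((\log L)^2/L)\) contributions and the \(\sigma\cdot(\cdots)/L\) cross terms are \(o(1)\).

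Subtracting, the \(\tfrac14L\) and \(-\tfrac12\log L\) terms cancel. The logarithm therefore tends to
\[
 \log2-\frac\sigma2-\frac12\log(2a),
\]
which is the stated limit.

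For \(\sigma=\log(2/a)+\epsilon\) we have \(\tfrac12\log(2/a)+\tfrac12\log(2a)=\log2\), so the exponent reduces to \(-\epsilon/2\) and the limit is \(\e^{-\epsilon/2}\).

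\emph{Main obstacle.} The delicate part is Step 2. \(K\) must be known to within \(o(\sqrt n/L)\), because \(c_nK\) multiplies its relative error by \(L/4\). So the floor, the \(-2\log n\) shift and the \(O(1/y)\) error in \(H\) all have to be checked to contribute \(o(1)\) after multiplication by \(c_n\). They do, since \(c_n\cdot O(1)\to0\). The second-order term \((\log L)^2/L^2\), multiplied by \(L/4\), also tends to \(0\).
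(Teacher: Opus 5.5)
Your proposal is correct and follows essentially the same route as the paper: you expand \(H=\tfrac12\log n+\log\log n-\log(2a)+o(1)\), control \(K\) to relative accuracy \(o(1/\log n)\), and observe that the \(\tfrac14\log n\) and \(-\tfrac12\log\log n\) terms cancel against \(-\tfrac12\log c_n\). The only cosmetic difference is that the paper never expands \(K\) to second order; instead it writes \(\gamma(K-1)=(\log n+\sigma)^2/(8H)+o(1)\) using the exact identity \(c_ny=(\log n+\sigma)^2\), and then expands that quotient directly.
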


\begin{proof}
Since \(y\to\infty\),
\begin{equation}\label{eq:H-expansion}
 H=\log\frac y{2a}+\log\left(1+\frac{2a}y\right)
 =\frac12\log n+\log(\log n+\sigma)-\log(2a)+o(1),
\end{equation}
so \(H=\bigl(\tfrac12+o(1)\bigr)\log n\).  As
\(y-2\log n=(1+o(1))y\) and \(y=\sqrt n(\log n+\sigma)\), this gives
\[
 K=(1+o(1))\frac yH
 =(1+o(1))\frac{\sqrt n(\log n+\sigma)}{\frac12\log n}
 \sim2\sqrt n.
\]

For the limit, take logarithms: with
\[
 \gamma=-\log\vartheta=-\frac12\log(1-c_n/4)
 =\frac{c_n}8+O(c_n^2),
\]
it suffices to show
\(\gamma(K-1)-\log(2/\sqrt{c_n})
\to\frac\sigma2+\frac12\log(2a)-\log2\).
Write \(H=\frac12\log n+h_n\) with
\(h_n=\log(\log n+\sigma)-\log(2a)+o(1)=O(\log\log n)\), by
\eqref{eq:H-expansion}.  Then
\[
\begin{aligned}
 \frac{(\log n+\sigma)^2}{8H}
 &=\frac{(\log n+\sigma)^2}{4\log n}
 \left(1-\frac{2h_n}{\log n}
 +O\!\left(\Bigl(\frac{h_n}{\log n}\Bigr)^{2}\right)\right)\\
 &=\frac14\log n+\frac\sigma2-\frac{h_n}2+o(1)\\
 &=\frac14\log n-\frac12\log\log n
 +\frac\sigma2+\frac12\log(2a)+o(1).
\end{aligned}
\]
Moreover, \(K=(y-2\log n)/H+O(1)\), and \(K=O(\sqrt n)\) gives
\(c_n^2K=o(1)\).  Since \(c_ny=(\log n+\sigma)^2\) exactly,
\[
 c_n(y-2\log n)
 =(\log n+\sigma)^2-\frac{2(\log n+\sigma)\log n}{\sqrt n}
 =(\log n+\sigma)^2+o(1).
\]
Hence
\[
 \gamma(K-1)
 =\frac{c_n(y-2\log n)}{8H}+O(c_n)+O(c_n^2K)
 =\frac{(\log n+\sigma)^2}{8H}+o(1),
\]
while
\[
 \log\frac2{\sqrt{c_n}}
 =\log2-\frac12\log c_n
 =\log2+\frac14\log n-\frac12\log\log n+o(1).
\]
Subtracting, the divergent terms \(\frac14\log n\) and
\(-\frac12\log\log n\) cancel exactly, and
\[
 \gamma(K-1)-\log\frac2{\sqrt{c_n}}
 =\frac\sigma2+\frac12\log(2a)-\log2+o(1),
\]
which proves the stated limit.  Finally, for
\(\sigma=\log(2/a)+\epsilon\),
\[
 \frac\sigma2+\frac12\log(2a)-\log2
 =\frac12\log\frac2a+\frac\epsilon2
 +\frac12\log(2a)-\log2
 =\frac\epsilon2.
 \qedhere
\]
\end{proof}


\begin{thebibliography}{99}

\bibitem{Bloom973}
T.~F. Bloom,
\emph{Erd\H{o}s Problem \#973},
Erd\H{o}s Problems,
\url{https://www.erdosproblems.com/973},
accessed 31 July 2026.

\bibitem{Erdos1965}
P.~Erd\H{o}s,
Some recent advances and current problems in number theory,
in \emph{Lectures on Modern Mathematics}, Vol.~III,
T.~L. Saaty, ed.,
Wiley, New York, 1965, pp.~196--244.

\bibitem{Hayman1974}
W.~K. Hayman,
Research problems in function theory: new problems,
in \emph{Proceedings of the Symposium on Complex Analysis
(Canterbury, 1973)},
London Math. Soc. Lecture Note Ser., vol.~12,
Cambridge Univ. Press, London, 1974, pp.~155--180,
\url{https://doi.org/10.1017/CBO9780511662263.034}.

\bibitem{LuoYangZhu2026}
Y.~Luo, R.~Yang, and K.~Zhu,
Exterior power sums,
arXiv:2607.22017v1 [math.CO], 24 July 2026,
\url{https://arxiv.org/abs/2607.22017}.

\bibitem{Turan1984}
P.~Tur\'an,
\emph{On a New Method of Analysis and Its Applications},
with the assistance of G.~Hal\'asz and J.~Pintz,
Pure and Applied Mathematics,
Wiley-Interscience, New York, 1984.

\bibitem{VanderPoorten1970}
A.~J. van der Poorten,
Generalisations of Tur\'an's main theorems on lower bounds for sums
of powers,
\emph{Bull. Austral. Math. Soc.} \textbf{2} (1970), 15--37,
\url{https://doi.org/10.1017/S0004972700041575}.

\end{thebibliography}
\end{document}